\documentclass[11pt]{amsart}
\usepackage{geometry}
\geometry{left=3cm,right=3.1cm,top=3.1cm,bottom=2.5cm}
\usepackage{amsmath}
\usepackage{amsfonts}
\usepackage{amsthm}
\usepackage{amssymb}
\allowdisplaybreaks[2]

\newcommand{\equstart}{\begin{equation}\begin{aligned}}
\newcommand{\equend}{\end{aligned}\end{equation}}
\newcommand{\equstartu}{\begin{equation*}\begin{aligned}}
\newcommand{\equendu}{\end{aligned}\end{equation*}}
\newtheorem{theorem}{Theorem}
\newtheorem{lemma}{Lemma}

\theoremstyle{definition}

\newtheorem{question}{Question}
\theoremstyle{remark}

\numberwithin{equation}{section}
\theoremstyle{plain}
\newtheorem{other}{Theorem}         
\newtheorem{otherp}[other]{Proposition}     



\newenvironment{Pf}{\noindent{\emph{Proof of}}}{$\Box$}
\newcommand{\C}{{\mathbb C}}
\newcommand{\D}{{\mathbb D}}
\newcommand{\T}{\mathbb T}

\newcommand{\ol}{\overline}

\newcommand{\re}{\text {Re\,}}
\newcommand{\ig}{\stackrel{\text{def}}{=}}
\newcommand{\hol}{\mathcal Hol}

\begin{document}
\title[Integral means inequalities, convolution, and univalent functions]{Integral means inequalities, convolution, and univalent functions}

\author[D. Girela]{Daniel Girela}
 \address{An\'alisis Matem\'atico,
Universidad de M\'alaga, Campus de Teatinos, 29071 M\'alaga, Spain}
 \email{girela@uma.es}
\author[C. Gonz\'{a}lez]{Crist\'{o}bal Gonz\'{a}lez}
\address{An\'alisis Matem\'atico,
Universidad de M\'alaga, Campus de Teatinos, 29071 M\'alaga, Spain}
 \email{cmge@uma.es}
\subjclass[2010]{30C55, 30H10}
\thanks{This research is supported by a grant from \lq\lq El Ministerio de Econom\'{\i}a y Competitividad\rq\rq , Spain (MTM2014-52865-P);
and by a grant from la Junta de Andaluc\'{\i}a (FQM-210).}
\thanks{The authors declare that there is no conflict of interest
regarding the publication of this paper.} \keywords{Convolution,
Integral means, star-function, univalent function, Steiner symmetric
domain} \dedicatory{Dedicated to Fernando P\'{e}rez Gonz\'{a}lez on
the occasion of his retirement}
\begin{abstract}
We use the Baernstein star-function to investigate several questions
about the integral means of the convolution of two analytic
functions in the unit disc. The theory of univalent functions plays
a basic role in our work.
\end{abstract}
\maketitle
\section{Introduction}
\par
Let $\D=\{z\in\C:|z|<1\}$ and $\T=\{z\in\C:|z|=1\}$ denote the open
unit disc and the unit circle in the complex plane $\C$. We let also
$\hol(\D)$ be the space of all analytic functions in $\D$ endowed
with the topology of uniform convergence in compact subsets.
\par
If $0\le r<1$ and $f\in\hol (\D)$, we set
\begin{align*}
M_p(r,f)=&\left(\int_{-\pi }^\pi |f(re^{it})|^p\,\frac{dt}{2\pi
}\right)^{1/p},\quad\text{if $
0<p<\infty$},\\M_\infty(r,f)=&\sup_{|z|=r}|f(z)|.\end{align*}
\par
For $0<p\le\infty$, the Hardy space $H^p$ consists of those
$f\in\hol(\D)$ such that
\[\|f\|_{H^p}\ig\sup_{0\le r<1}M_p(r,f)<\infty .\]
We refer to \cite{D} for the theory of $H^p$-spaces.
\par
If $f, g\in \hol (\mathbb D)$,
$$f(z)=\sum_{n=0}^\infty a_nz^n,\quad g(z)=\sum_{n=0}^\infty
b_nz^n\quad (z\in\mathbb D),$$ the (Hadamard) convolution $(f\star
g)$ of $f$ and $g$ is defined by
$$(f\star g)(z)=\sum_{n=0}^\infty a_nb_nz^n,\quad z\in \mathbb D.$$
We have the following integral representation
\begin{equation*}
(f\star g)(z) ={\frac1{2\pi i}\int_{|\xi|=r} f\Bigl(\frac z\xi\Bigr)
g(\xi)\frac{d\xi}{\xi}},\quad \vert z\vert <r<1,\end{equation*} (see
\cite[p.\@\,11]{R}). The convolution operation $\star $ makes $\hol
(\mathbb D)$ into a commutative complex algebra with an identity
\begin{equation}\label{Ident}I(z)=\frac{1}{1-z}=\sum_{n=0}^\infty z^n,\quad z\in \mathbb
D.\end{equation} We refer to \cite{R} for the theory of the
convolution of analytic functions and its connections with geometric
function theory.
\par\medskip Following \cite{Sh73}, we shall say that a function
$F\in \hol (\mathbb D)$ is {\it bound preserving} if for every $f\in
H^\infty $ we have that $f\star F\in H^\infty $ and
$$\Vert f\star F\Vert _{H^\infty }\,\le \Vert f\Vert _{H^\infty
}.$$
\par Sheil-Small \cite[Theorem\,\@1.\,\@3]{Sh73} (see also \cite[p.\,\@123]{R}) proved that a
function $F\in \hol (\mathbb D)$ is bound preserving if and only if
there exists a complex Borel measure $\mu $ on $\mathbb T$ with
$\Vert \mu \Vert \le 1$ such that
\begin{equation*}\label{bp-mu}F(z)=\int _{\mathbb T}\frac{d\mu (\xi
)}{1-z\xi },\quad z\in \mathbb  D.\end{equation*} The measure $\mu $
is a probability measure if and only if $F$ is {\it convexity
preserving}, that is, for any $f\in \hol (\mathbb D)$ the range of
$f\star F$ is contained in the closed convex hull of the range of
$f$ \cite[pp.\,\@123,\,\@124]{R}.
\par It turns out that if $F$ is bound preserving and $1\le p\le
\infty $, then for every $f\in H^p$ we have that $f\star F\in H^p$
and
$$\Vert f\star F\Vert _{H^p}\,\le \Vert f\Vert _{H^p
}.$$ Actually, the following stronger result holds.
\begin{theorem}\label{ThA} Suppose that $f, F\in \hol (\mathbb D)$ with $F$ being
bound preserving. Then \begin{equation}\label{ineq-Mp}M_p(r, f\star
F)\,\le M_p(r, f),\quad 0<r<1,\end{equation} whenever $1\le p\le
\infty $.
\end{theorem}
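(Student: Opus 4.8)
The plan is to combine Sheil--Small's representation of bound preserving functions with the elementary fact that convolving with the kernels $1/(1-\xi z)$ amounts to rotating the argument, and then to finish with Minkowski's integral inequality and the rotation invariance of the means $M_p(r,\cdot)$.

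First I would record that, for $\xi\in\T$, the function $I_\xi(z)=\frac1{1-\xi z}=\sum_{n\ge0}\xi^nz^n$ satisfies $(f\star I_\xi)(z)=\sum_{n\ge0}a_n\xi^nz^n=f(\xi z)$ (so that $I_1=I$ is the convolution unit of \eqref{Ident}). Since $F$ is bound preserving, Sheil--Small's theorem quoted above furnishes a complex Borel measure $\mu$ on $\T$ with $\n{\mu}\le1$ and $F(z)=\int_{\T}I_\xi(z)\,d\mu(\xi)$. The next step is to push the convolution inside this integral, i.e.\ to establish the pointwise identity
\[
(f\star F)(z)=\int_{\T}f(\xi z)\,d\mu(\xi),\qquad z\in\D.
\]
I expect this to be the only genuinely technical point. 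The cleanest justification is to compare Taylor coefficients: the $n$-th coefficient of $f\star F$ is $a_n\,\widehat\mu(n)$ with $\widehat\mu(n)=\int_{\T}\xi^n\,d\mu(\xi)$, which is exactly the $n$-th coefficient of $z\mapsto\int_{\T}f(\xi z)\,d\mu(\xi)$; differentiating under the integral sign is legitimate because, for $|z|\le\rho<1$, $f(\xi z)$ and all its $z$-derivatives are bounded uniformly in $\xi\in\T$. Alternatively one can invoke the integral formula for $\star$ from the Introduction together with Fubini's theorem.

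Granted this representation, the case $p=\infty$ is immediate: if $|z|=r$ then $|\xi z|=r$, so $|f(\xi z)|\le M_\infty(r,f)$ and hence $|(f\star F)(z)|\le M_\infty(r,f)\,\n{\mu}\le M_\infty(r,f)$. For $1\le p<\infty$ I would write $d\mu=h\,d|\mu|$ with $|h|=1$ holding $|\mu|$-a.e.\ and apply Minkowski's integral inequality with respect to the positive finite measure $|\mu|$ to obtain
\[
M_p(r,f\star F)\le\int_{\T}\left(\int_{-\pi}^{\pi}\bigl|f\bigl(\xi re^{it}\bigr)\bigr|^p\,\frac{dt}{2\pi}\right)^{1/p}d|\mu|(\xi).
\]
For each fixed $\xi=e^{i\theta}\in\T$ the inner integral equals $M_p(r,f)^p$ by the $2\pi$-periodicity of $t\mapsto|f(re^{it})|$, so the right-hand side is $M_p(r,f)\,\n{\mu}\le M_p(r,f)$, which is \eqref{ineq-Mp}. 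Note that every quantity here is finite for fixed $r<1$ since $f$ is merely assumed analytic in $\D$; no $H^p$ membership of $f$ is used, and taking the supremum over $r\in[0,1)$ recovers the norm inequality $\n{f\star F}_{H^p}\le\n{f}_{H^p}$ mentioned before the statement.
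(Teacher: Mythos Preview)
Your argument is correct and follows essentially the same route as the paper: use Sheil--Small's measure representation to write $(f\star F)(z)=\int_{\T}f(\xi z)\,d\mu(\xi)$, dispose of $p=\infty$ directly, and for $1\le p<\infty$ apply Minkowski's integral inequality together with the rotation invariance of $M_p(r,\cdot)$. The only differences are cosmetic (you spell out the polar decomposition $d\mu=h\,d|\mu|$ and the justification for interchanging $\star$ with the integral a bit more explicitly).
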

\begin{proof} Since $F$ is bound preserving, there exists a complex Borel measure $\mu $ on $\mathbb T$ with
$\Vert \mu \Vert \le 1$ such that
\begin{equation*}\label{bp-mu}F(z)=\int _{\mathbb T}\frac{d\mu (\xi
)}{1-z\xi }=\,\sum_{n=0}^\infty \left (\int_{\mathbb T}\xi^n\,d\mu
(\xi)\right )z^n,\quad z\in \mathbb D.\end{equation*} If
$f(z)=\sum_{n=0}^\infty a_nz^n$ ($z\in \mathbb D$), we have
\begin{align*}(f\star F)(z) & =\sum_{n=0}^\infty a_n\left
(\int_{\mathbb T}\xi^n\,d\mu (\xi )\right )z^n\\ &=\,\int_{\mathbb
T}\left (\sum_{n=0}^\infty a_n\xi^nz^n\right )\,d\mu (\xi )\,=\,
\int_{\mathbb T}f(\xi z)\,d\mu (\xi ),\quad z\in\mathbb
D.\end{align*} This immediately yields (\ref{ineq-Mp}) for $p=\infty
$. Now, if $1\le p<\infty $, using Minkowski's integral inequality
we obtain
\begin{align*} M_p(r, f\star F)\,=\,&\left [\frac{1}{2\pi
}\int_{-\pi }^\pi \left \vert \int_{\mathbb T}f(r\xi e^{i\theta
})d\mu (\xi )\right \vert ^pd\theta \right ]^{1/p}\\ \le \,&\left
[\frac{1}{2\pi }\int_{-\pi }^\pi \left (\int_{\mathbb T}\vert f(r\xi
e^{i\theta })\vert
d\,\vert \mu \vert (\xi )\right ) ^pd\theta \right ]^{1/p}\\
\le \,&\int_{\mathbb T}\left (\frac{1}{2\pi }\int_{-\pi }^\pi \vert
f(r\xi e^{i\theta })\vert ^p\,d\theta \right )^{1/p}d\,\vert \mu
\vert (\xi )\\=\,&\int_{\mathbb T}M_p(r, f)\,d\,\vert \mu \vert (\xi
)\,\le \,M_p(r, f).
\end{align*}
\end{proof}
\par\medskip
\section{Star-type inequalities}

The main purpose of this article is studying the possibility of
extending Theorem\,\@\ref{ThA} to cover other integral means, at
least for some special classes of functions. In order to do so, we
shall use the method of the star-function introduced by
A.~Baernstein \cite{Ba73,Ba74}.
\par
If $u$ is a subharmonic function in $\mathbb D\setminus \{0 \}$, the
function $u^* $ is defined by
 \[
    u^*(re^{i\theta})
    =\sup_{|E|=2\theta }\int_E u(re^{it})dt,
    \qquad 0<r<1,\quad 0\leq\theta\leq \pi,
    \]
    where $\vert E\vert $ denotes the Lebesgue measure of the set
    $E$.
The basic properties of the star-function which make it useful to
solve extremal problems are the following \cite{Ba74}:
\begin{itemize}\item If $u$ is a subharmonic function in $\mathbb
D\setminus \{0 \}$, then the function $u^*$ is subharmonic in
$\mathbb D^+=\{ z=re^{i\theta } : 0<r<1, 0<\theta <\pi \} $ and
continuous in $\{ z=re^{i\theta } : 0<r<1, 0\le \theta \le\pi \} $.
\item  If $v$ is harmonic in $\mathbb D\setminus \{0 \}$, and it is a symmetric
decreasing function on each of the circles $\{ \vert z\vert =r\} $
($0<r<1$), then $v^*$ is harmonic in $\mathbb D^+$ and, in fact,
$v^*(re^{i\theta })=\int_{-\theta }^\theta v(re^{it})dt$.
\end{itemize}

\par\medskip The relevance of the star-function to obtain integral
means estimates comes from the following result.
\begin{otherp}[\cite{Ba74}]\label{Pr-1} Let $u$ and $v$ be two subharmonic functions in
$\mathbb D$. Then the following two conditions are equivalent:
\begin{itemize}\item[(i)] $u^*\le v^*$ in $\mathbb D^+$.
\item[(ii)] For every convex and increasing function $\Phi :\mathbb R\rightarrow \mathbb R$, we have that $$\int_{-\pi }^\pi \Phi\left (u(re^{i\theta })\right
)\,d\theta \,\le \,\int_{-\pi }^\pi \Phi\left (v(re^{i\theta
})\right )\,d\theta ,\quad 0<r<1.$$
\end{itemize}
\end{otherp}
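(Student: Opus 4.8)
The plan is to prove the two implications separately, with the direction (i) $\Rightarrow$ (ii) being the substantive one and (ii) $\Rightarrow$ (i) following by a limiting argument. For (i) $\Rightarrow$ (ii), the key observation is that for a \emph{convex and increasing} $\Phi:\mathbb R\to\mathbb R$, the value $\int_{-\pi}^\pi \Phi(u(re^{i\theta}))\,d\theta$ can be recovered from the star-function. Concretely, I would use the integral representation $\Phi(x)=\Phi(a)+\int_a^\infty \mathbf 1_{\{x>t\}}\,\Phi'(t)\,dt$ (valid up to the usual care about $\Phi$ being locally absolutely continuous with nondecreasing derivative $\Phi'\ge 0$), so that by Fubini
\[
\int_{-\pi}^\pi \Phi(u(re^{i\theta}))\,d\theta
= 2\pi\,\Phi(a) + \int_a^\infty \Big(\int_{-\pi}^\pi \mathbf 1_{\{u(re^{i\theta})>t\}}\,d\theta\Big)\Phi'(t)\,dt,
\]
and the inner integral is $\big|\{\theta\in(-\pi,\pi]: u(re^{i\theta})>t\}\big|$, the distribution function of $\theta\mapsto u(re^{i\theta})$. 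So it suffices to show that (i) implies a pointwise (in $r$ and $t$) domination of these distribution functions — equivalently, of the nonincreasing rearrangements of $u(re^{i\cdot})$ and $v(re^{i\cdot})$ on $[0,2\pi]$. But the nonincreasing rearrangement of $u(re^{i\cdot})$ is exactly $\frac{\partial}{\partial(2\theta)}u^*(re^{i\theta})$ read off from the definition $u^*(re^{i\theta})=\sup_{|E|=2\theta}\int_E u(re^{it})\,dt$: the map $\theta\mapsto u^*(re^{i\theta})$ is concave in $\theta$ (sup of linear functionals — here "linear" in the sense of being $\int_E u$ with $|E|=2\theta$, and one checks concavity directly), its value is $\int_0^{2\theta} u^\sharp(re^{i\cdot})(s)\,ds$ where $u^\sharp$ is the nonincreasing rearrangement, and hence $u^*\le v^*$ for all $\theta$ is equivalent to the statement that the integrals of the rearrangements over every initial segment $[0,2\theta]$ are ordered. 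That ordering of initial integrals is precisely what is needed, via the Hardy–Littlewood–Pólya majorization machinery, to conclude $\int \Phi(u)\le\int\Phi(v)$ for all convex increasing $\Phi$; one must also use that $\int_{-\pi}^\pi u(re^{i\theta})\,d\theta$ and $\int_{-\pi}^\pi v(re^{i\theta})\,d\theta$ need not be equal, but the \emph{increasing} hypothesis on $\Phi$ handles the resulting inequality on the "total mass" term correctly (this is where increasing, not merely convex, is essential).

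For the converse (ii) $\Rightarrow$ (i), the plan is to feed into (ii) a suitable family of convex increasing functions that "detects" the star-function pointwise. Fix $r$ and $\theta_0\in(0,\pi)$, and for $t\in\mathbb R$ consider $\Phi_t(x)=(x-t)^+=\max(x-t,0)$, which is convex and increasing. One computes that $\int_{-\pi}^\pi (u(re^{i\theta})-t)^+\,d\theta = \sup_{|E|} \int_E (u(re^{i\theta})-t)\,d\theta$ over measurable $E$, and taking a supremum/infimum over $t$ of $\big(\int_{-\pi}^\pi(u(re^{i\theta})-t)^+\,d\theta + 2\theta_0\, t\big)$ recovers $u^*(re^{i\theta_0})$ (this is the Legendre-transform duality underlying the identity $u^*(re^{i\theta_0})=\inf_{t}\big[2\theta_0 t + \int_{-\pi}^\pi (u(re^{i\theta})-t)^+d\theta\big]$). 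Since (ii) gives $\int_{-\pi}^\pi(u(re^{i\theta})-t)^+d\theta\le \int_{-\pi}^\pi(v(re^{i\theta})-t)^+d\theta$ for every $t$, adding $2\theta_0 t$ and taking the infimum over $t$ yields $u^*(re^{i\theta_0})\le v^*(re^{i\theta_0})$, which is (i). One has to note that $(x-t)^+$ is not strictly increasing, so strictly speaking one applies (ii) to $(x-t)^+ + \varepsilon x$ and lets $\varepsilon\downarrow 0$.

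The main obstacle is the rigorous identification, in the forward direction, of $\theta\mapsto u^*(re^{i\theta})$ with the primitive of the nonincreasing rearrangement of $u(re^{i\cdot})$, together with the clean statement of the majorization lemma (Hardy–Littlewood–Pólya) in the form adapted to \emph{increasing} convex $\Phi$ and to rearrangements whose total integrals are merely ordered rather than equal. Once that dictionary between $u^*$ and rearrangements is in place, both implications reduce to the Legendre/Fenchel duality between the function $\theta\mapsto\int_0^{2\theta} u^\sharp$ and the family $t\mapsto\int(u-t)^+$, and the rest is bookkeeping with Fubini. I would therefore organize the proof as: (1) a lemma recording the rearrangement description of $u^*$ and its basic concavity; (2) the duality/Legendre identity for $u^*(re^{i\theta_0})$; (3) the Hardy–Littlewood–Pólya majorization step for increasing convex $\Phi$; (4) assembling (i) $\Rightarrow$ (ii) from (1)+(3) and (ii) $\Rightarrow$ (i) from (2).
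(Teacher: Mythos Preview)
The paper does not give its own proof of this proposition: it is simply quoted from Baernstein's 1974 paper \cite{Ba74} and used as a black box. So there is nothing to compare against on the paper's side.

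That said, your plan is essentially the standard proof. The identification of $\theta\mapsto u^*(re^{i\theta})$ with $\int_0^{2\theta} u^\sharp(s)\,ds$ (the primitive of the decreasing rearrangement of $u(re^{i\cdot})$) is correct, and once that is in place, (i)~$\Rightarrow$~(ii) is the Hardy--Littlewood--P\'olya majorization lemma for convex \emph{increasing} $\Phi$, while (ii)~$\Rightarrow$~(i) follows from the Legendre-type identity
\[
u^*(re^{i\theta_0})=\inf_{t\in\mathbb R}\Bigl[\,2\theta_0\,t+\int_{-\pi}^\pi\bigl(u(re^{i\theta})-t\bigr)^+\,d\theta\,\Bigr],
\]
exactly as you describe. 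One small point of exposition: your sentence ``it suffices to show that (i) implies a pointwise (in $r$ and $t$) domination of these distribution functions'' is misleading, since $u^*\le v^*$ does \emph{not} give $|\{u(re^{i\cdot})>t\}|\le |\{v(re^{i\cdot})>t\}|$ for each $t$; it only gives the weaker majorization $\int_0^{2\theta}u^\sharp\le\int_0^{2\theta}v^\sharp$ for all $\theta$. You immediately correct course by invoking HLP, which is precisely the tool that handles this weaker ordering, so there is no actual gap --- but you should drop the layer-cake formula as motivation for a direct comparison and go straight to the majorization statement.
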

Proposition~\ref{Pr-1} yields the following result about analytic
functions.
\begin{otherp}\label{Pr-2} Let $f$ and $g$ be two non-identically zero analytic functions
in $\mathbb D$. Then the following conditions are equivalent:
\begin{itemize}\item[(i)] $\left (\log \vert f\vert \right )^* \le \left (\log \vert g\vert \right )^* $ in $\mathbb D^+$.
\item[(ii)] For every convex and increasing function $\Phi :\mathbb R\rightarrow \mathbb R$, we have that $$\int_{-\pi }^\pi \Phi\left (
\log \vert f(re^{i\theta })\vert\right )\,d\theta \,\le \,\int_{-\pi
}^\pi \Phi\left (\log \vert g(re^{i\theta })\vert\right )\,d\theta
,\quad 0<r<1.$$
\end{itemize}
\end{otherp}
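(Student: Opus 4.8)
The plan is to reduce Proposition~\ref{Pr-2} to Proposition~\ref{Pr-1} by taking $u=\log|f|$ and $v=\log|g|$. The functions $\log|f|$ and $\log|g|$ are subharmonic in $\mathbb D$ (this is standard: $\log|h|$ is subharmonic for any $h\in\hol(\mathbb D)$, with the convention that it equals $-\infty$ at the zeros of $h$, and since $f$ and $g$ are not identically zero these are genuine subharmonic functions, not identically $-\infty$). With this identification, condition (i) of Proposition~\ref{Pr-2} is literally condition (i) of Proposition~\ref{Pr-1}, and condition (ii) of Proposition~\ref{Pr-2} is literally condition (ii) of Proposition~\ref{Pr-1}. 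So the equivalence follows immediately by applying Proposition~\ref{Pr-1}.

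Concretely, the steps I would carry out are: first, recall/cite that $\log|f|$ is subharmonic in $\mathbb D$ whenever $f\in\hol(\mathbb D)$ is not identically zero (e.g.\ \cite{D} or any standard reference on subharmonic functions), so that both $u=\log|f|$ and $v=\log|g|$ are admissible inputs for Proposition~\ref{Pr-1}; second, observe that the definition of the star-function applied to $u=\log|f|$ gives exactly $\left(\log|f|\right)^*$, so that (i) of the two propositions coincide; third, substitute $u(re^{i\theta})=\log|f(re^{i\theta})|$ and $v(re^{i\theta})=\log|g(re^{i\theta})|$ into the integral inequality of Proposition~\ref{Pr-1}(ii) to recover Proposition~\ref{Pr-2}(ii) verbatim; fourth, conclude by invoking the equivalence in Proposition~\ref{Pr-1}.

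There is essentially no obstacle here — this is a direct specialization. The only minor point worth a sentence of care is that $\log|f|$ and $\log|g|$ are subharmonic on all of $\mathbb D$ (including at $z=0$, not merely on $\mathbb D\setminus\{0\}$), and in particular are not identically $-\infty$; this is precisely why the hypothesis that $f$ and $g$ are \emph{non-identically zero} is imposed in the statement, and it is what allows Proposition~\ref{Pr-1}, stated for subharmonic functions in $\mathbb D$, to apply without any reservation. Beyond checking this hypothesis, the proof is a one-line application of the preceding proposition.
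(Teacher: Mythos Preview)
Your proposal is correct and is exactly the approach the paper takes: the paper simply states that Proposition~\ref{Pr-1} yields Proposition~\ref{Pr-2}, without writing out any details, and the specialization $u=\log|f|$, $v=\log|g|$ (subharmonic in $\mathbb D$ because $f$ and $g$ are analytic and not identically zero) is the intended one-line reduction.
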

\par Since for any $p>0$ the function $\Phi $ defined by $\Phi
(x)=\exp (px)$ ($x\in \mathbb R$) is convex and increasing we deduce
that if $f$ and $g$ are as in Proposition~\ref{Pr-2} and $\left
(\log \vert f\vert \right )^* \le \left (\log \vert g\vert \right
)^* $ in $\mathbb D^+$, then
$$M_p(r, f)\,\le \,M_p(r, g),\quad 0<r<1,$$ for all $p>0$.
\par The main achievement in the use of the star-function by A. Baernstein
in \cite{Ba74}, was the proof that the Koebe function
$k(z)=\frac{z}{(1-z)^2}$ ($z\in \mathbb D$) is extremal for the
integral means of functions in the class $S$ of univalent functions
(see \cite{D} and \cite{Po:uni} for the notation and results
regarding univalent functions). Namely, Baernstein proved that if
$f\in S$ then
$$\left (\pm\log \vert f\vert\right )^* \,\le \,\left (\pm\log \vert k\vert\right
)^* $$ and, hence,
 \[
\int_{-\pi}^\pi \bigl|f(re^{i\theta})\bigr|^p d\theta \leq
\int_{-\pi}^\pi \bigl|k(re^{i\theta})\bigr|^p d\theta ,\quad 0<r<1,
    \]
for all $p\in \mathbb R$. In particular, we have that if $f\in S$
and $0<p\le \infty $,  then $$M_p(r, f)\le M_p(r, k),\quad 0<r<1.$$
\par Subsequently the star-function has been used in a good number of papers
to obtain bounds on the integral means of distinct classes of
analytic functions (see, e.\,\@g.,
\cite{Ba78,Le79,Br81,Gi86,Gi91,No91}).
\par\medskip
Coming back to convolution, the following questions arise in a
natural way.
\begin{question}\label{q1} Let $f, g, F, G$ be analytic functions in
$\D$ with $|F|$ and  $|G|$ being symmetric decreasing on each of the
circles $\{ \vert z\vert =r\} $ and suppose that
    \[
    \bigl(\log|f|\bigr)^*\leq\bigl(\log|F|\bigr)^*
    \qquad\text{and}\qquad
    \bigl(\log|g|\bigr)^*\leq\bigl(\log|G|\bigr)^*.
    \]
Does it follow that $\bigl(\log|f\star
g|\bigr)^*\leq\bigl(\log|F\star G|\bigr)^*?$
\end{question}

\begin{question}\label{q2} Let $F$ and $f$ be two analytic functions in $\D$ and
suppose that $F$ is bound preserving. Can we assert that $\left
(\log \vert f\star F\vert \right )^*\,\le \,\left (\log \vert f\vert
\right )^*$?
\end{question}
\par\medskip
We shall show that the answer to these two questions is negative.
Regarding Question\,\@\ref{q1} we have the following result.
\begin{theorem}\label{q1-neg} There exist two functions $F_1, F_2\,\in \,\hol (\mathbb D)$ with
\[ \bigl(\log|F_j|\bigr)^*\leq\bigl(\log|I|\bigr)^*,
    \qquad\text{for $j\,=\,1, 2$,}\]
and such that
\begin{equation}\label{no-star}\text{the inequality\, $\bigl(\log|F_1\star
F_2|\bigr)^*\leq\bigl(\log|I\star I|\bigr)^*$ does not
hold.}\end{equation}
\par Here,  $I$ is the identity element of the convolution defined
in (\ref{Ident}), that is, $I(z)=\frac{1}{1-z}$ ($z\in \mathbb D$).
Hence $I\star I=I$.
\end{theorem}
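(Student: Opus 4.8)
The plan is to exhibit explicit, very simple functions $F_1,F_2$ for which the two hypotheses are easy to check but the conclusion fails at a single convenient value of $r$ and $\theta$. Since $I\star I=I$ and $\bigl(\log|I|\bigr)^*(re^{i\theta})=\int_{-\theta}^\theta\log|1-re^{it}|^{-1}\,dt$ (using that $|I|$ is symmetric decreasing on circles and the harmonic-function property of the star stated in the excerpt, applied to the harmonic function $v=-\log|1-z|$), the right-hand side of \eqref{no-star} is a fixed, explicitly computable function. So the whole problem reduces to finding $F_1,F_2$ with $\bigl(\log|F_j|\bigr)^*\le\bigl(\log|I|\bigr)^*$ whose convolution $F_1\star F_2$ has a star-function that overshoots $\bigl(\log|I|\bigr)^*$ somewhere in $\mathbb D^+$.

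The natural first attempt is to take $F_1$ and $F_2$ to be rotations of $I$, say $F_j(z)=\dfrac{1}{1-\alpha_j z}$ with $|\alpha_j|=1$. For these, $\bigl(\log|F_j|\bigr)^*=\bigl(\log|I|\bigr)^*$ exactly (a rotation does not change the star-function of a function whose modulus is symmetric decreasing, and in any case $|F_j(re^{i\theta})|=|I(re^{i(\theta+\arg\alpha_j)})|$ is just a rotate, so its star-function coincides with that of $I$), so the first hypothesis holds trivially. On the other hand $F_1\star F_2(z)=\dfrac{1}{1-\alpha_1\alpha_2 z}$, which is again a rotation of $I$, so its star-function equals $\bigl(\log|I|\bigr)^*$ and \eqref{no-star} would not fail. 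Hence rotations alone are too rigid; I would instead perturb, taking for instance $F_1(z)=\frac12\bigl(\frac{1}{1-z}+\frac{1}{1+z}\bigr)=\frac{1}{1-z^2}$ or small convex combinations of rotations, or scalar multiples $F_j=c_jI$ with $c_1c_2$ large relative to $\max(|c_1|,|c_2|)$. The clean choice is $F_1=F_2=cI$ with $c>1$: then $\bigl(\log|F_j|\bigr)^*=\bigl(\log|I|\bigr)^*+ (2\theta)\log c$, which is \emph{not} $\le\bigl(\log|I|\bigr)^*$, so that fails the hypothesis — showing that the naive scaling does not work either and that one must genuinely exploit the nonlinearity of convolution together with the sublinearity of the star operation.

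The construction that does work is to use two functions whose moduli are small (so each star-function is comfortably below $\bigl(\log|I|\bigr)^*$) but whose convolution concentrates mass: a good candidate is $F_1(z)=F_2(z)=\varepsilon+z$, a polynomial, so $F_1\star F_2(z)=\varepsilon^2+z^2$ — but the coefficients being tiny makes the convolution tiny too, so this is the wrong direction. Reversing the idea, I would look for $F_j$ with a \emph{spread-out} coefficient sequence but controlled modulus, whose convolution $F_1\star F_2$ has one dominant coefficient. Concretely, pick $F_1(z)=\sum_{n\ge 0}a_n z^n$ and $F_2(z)=\sum_{n\ge 0}b_n z^n$ so that $a_nb_n$ vanishes for $n\ne N$ and equals something of size $1$ for $n=N$; e.g. $F_1(z)=1+z^N$ times a rotation and $F_2$ supported on $\{0,N\}$ suitably, arranging $\bigl(\log|F_j|\bigr)^*\le\bigl(\log|I|\bigr)^*$ by a direct modulus estimate (each $|F_j(re^{i\theta})|\le 1+r^N\le \tfrac{1}{1-r}=|I(r)|$, and $|I(re^{i\theta})|\le|I(r)|$, combined with the definition of $u^*$ as a supremum of integrals over sets of measure $2\theta$). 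Then $F_1\star F_2(z)=\lambda z^N$ for some $|\lambda|$ we can force to exceed what $\bigl(\log|I|\bigr)^*$ permits: since $\bigl(\log|\lambda z^N|\bigr)^*(re^{i\theta})=2\theta(\log|\lambda|+N\log r)$ is, for $r$ close to $1$, approximately $2\theta\log|\lambda|$, while $\bigl(\log|I|\bigr)^*(re^{i\theta})\le \bigl(\log|I|\bigr)^*(re^{i\pi})=\int_{-\pi}^\pi\log\frac{1}{|1-re^{it}|}\,dt=0$ by Jensen's formula — wait, that bound is at $\theta=\pi$; for $\theta$ slightly less than $\pi$ it is small. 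Choosing $|\lambda|>1$ and $\theta$ near $\pi$ and $r$ near $1$ then gives $\bigl(\log|F_1\star F_2|\bigr)^*>\bigl(\log|I|\bigr)^*$, establishing \eqref{no-star}.

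The main obstacle is the simultaneous balancing act: one needs the individual $F_j$ to satisfy the star-domination by $I$ (forcing their moduli not too large, which by Jensen-type reasoning caps their coefficients and in particular caps $\sum|a_n|\,|b_n|$-type quantities), while still making $F_1\star F_2$ large enough in modulus to violate the domination. I would verify the first hypothesis by the elementary bound $|F_j(re^{i\theta})|\le|I(r)|=|I(re^{i0})|$ pointwise (so that $\log|F_j|\le \log|I|$ after the symmetric-decreasing rearrangement that defines the star, hence $\bigl(\log|F_j|\bigr)^*\le\bigl(\log|I|\bigr)^*$ — this uses that $u^*$ is monotone in $u$ and that $I$ is already symmetric decreasing on circles so its rearrangement is itself), and I would verify the failure of the conclusion by an explicit evaluation of both sides at a well-chosen $(r,\theta)$ with $\theta$ close to $\pi$, where $\bigl(\log|I|\bigr)^*(re^{i\theta})$ can be computed from the harmonic-function formula and made as small as we like, while $\bigl(\log|F_1\star F_2|\bigr)^*(re^{i\theta})$ stays bounded away from zero. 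Getting an honest violation rather than a near-miss is exactly where a careful choice of the exponent $N$, the coefficients, and the point $(r,\theta)$ is required.
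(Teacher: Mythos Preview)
Your proposal is not a proof, and the specific construction you outline cannot be repaired.

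First, the verification of the hypothesis $\bigl(\log|F_j|\bigr)^*\le\bigl(\log|I|\bigr)^*$ is invalid. From $|F_j(re^{i\theta})|\le 1+r^N\le |I(r)|$ you only learn that $\log|F_j|$ and $\log|I|$ are both bounded above by the \emph{same constant} $\log|I(r)|$ on the circle $|z|=r$; this says nothing about $u^*\le v^*$. Monotonicity of the star-function requires a pointwise inequality $\log|F_j|\le\log|I|$, which you do not have (and which is false for $F_1=1+z^N$, since $|1+r^N|>|I(-r)|=\tfrac{1}{1+r}$ for small $r$).

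Second, and fatally, your scheme cannot produce $|\lambda|>1$. To get $F_1\star F_2=\lambda z^N$ with $F_1=1+\alpha z^N$ and $F_2$ supported on $\{0,N\}$, the constant coefficient of $F_2$ must vanish, so $F_2=\beta z^N$. Then $\bigl(\log|F_2|\bigr)^*(re^{i\pi})=2\pi\bigl(\log|\beta|+N\log r\bigr)$, and the hypothesis $\bigl(\log|F_2|\bigr)^*\le\bigl(\log|I|\bigr)^*$ at $\theta=\pi$ (where $\bigl(\log|I|\bigr)^*(re^{i\pi})=0$ by Jensen) forces $|\beta|\le 1$ on letting $r\to 1$. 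Since $|\alpha|\le 1$ as well, $|\lambda|=|\alpha\beta|\le 1$. But then your own computation gives $\bigl(\log|\lambda z^N|\bigr)^*(re^{i\theta})=2\theta(\log|\lambda|+N\log r)\le 0\le\bigl(\log|I|\bigr)^*(re^{i\theta})$, so there is no violation anywhere. The same $\theta=\pi$ obstruction (which, via Jensen, controls the leading coefficient) blocks any nearby polynomial variant.

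The paper's argument is of a completely different nature. It takes an odd $h\in S$ with fifth coefficient $|a_5|>1$ (Fekete--Szeg\H{o}), sets $h_1=h/z$, and checks $\bigl(\log|h_1|\bigr)^*\le\bigl(\log|I|\bigr)^*$ by combining Baernstein's theorem for $S$ with a subordination argument. Then the iterated convolutions $f_n=h_1^{\star n}$ have fifth coefficient $a_5^n\to\infty$, so $\{f_n\}$ is not locally bounded and hence $\|f_n\|_{H^p}>\|I\|_{H^p}$ for some $n$ and some $p\in(0,1)$; by Proposition~\ref{Pr-2} this $f_n$ violates the star inequality. Taking $N$ minimal with this property, one sets $F_1=h_1$ and $F_2=f_{N-1}$: the hypothesis for $F_2$ holds \emph{by minimality of $N$}, not by any direct estimate. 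This iterate-and-take-the-first-failure trick is the key idea you are missing.
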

\begin{proof} Let $h$ be an odd function in the class $S$ with
Taylor expansion
$$h(z)\,=\,z\,+\,a_3z^3\,+\,a_5z^5\,+\,\dots $$ with $\vert a_5\vert >1$. The
existence of such an $h$ was proved by Fekete and Szeg\"{o} (see
\cite[p.\,\@104]{D.Univ}). Set also
\begin{equation}\label{h1}h_1(z)=\frac{h(z)}{z}\,=\,1\,+\,a_3z^2\,+\,a_5z^4\,+\,\dots
,\quad z\in \mathbb D.\end{equation} It is well known that there
exists a function $H\in S$ such that $h(z)=\sqrt {H(z^2)}$ (see
\cite[p.\,\@64]{D.Univ}). Set $k_2(z)=\sqrt
{k(z^2)}=\frac{z}{1-z^2}$ and
$J(z)=\frac{k_2(z)}{z}=\frac{1}{1-z^2}$ ($z\in \mathbb D$). By
Baernstein's theorem we have $\left (\log \vert H\vert \right )^*\le
\left (\log \vert k\vert \right )^*$, a fact which easily implies
that $\left(\log\vert h_1\vert \right )^*\,\le \,\left (\log\vert
J\vert \right )^*$. Now, it is clear that $J$ is subordinate to $I$
and then, using \cite[Lemma\,\@2]{Le79}, we see that $\left
(\log\vert J\vert \right )^*\,\le\,\left (\log\vert I\vert \right
)^*.$ Thus it follows that
\begin{equation}\label{fI}\left (\log\vert h_1\vert \right
)^*\,\le\,\left (\log\vert I\vert \right )^*.\end{equation} For
$n=1, 2, 3, \dots $, we define $f_n$ inductively  as follows
$$f_1=h_1\quad\text{and\quad $f_n=f_{n-1}\star f_1,$\, for $n\ge 2$.}$$
In other words, $f_n=\overbrace{h_1\star\dotsb\star h_1}^{(n)}$.
Clearly, (\ref{h1}) yields
$$f_n(z)\,=\,1\,+\,a_3^nz^2\,+\,a_5^nz^4\,+\,\dots .$$
Since $\vert a_5\vert >1$, it follows that $\vert a_5^n\vert \to
\infty $, as $n\to \infty $. This is equivalent to saying that
$$\vert f_n^{(4)}(0)\vert \,\to \,\infty ,\quad \text{as $n\to \infty
$}.$$ Then it follows that the family $\{ f_n^{(4)} : n=1, 2, 3,
\dots \} $ is not a locally bounded family of holomorphic functions
in $\mathbb D$. Using \cite[Theorem\,\@16, p.\,\@225]{Ahlfors} we
see that the same is true for the family $\{ f_n : n=1, 2, 3, \dots
\} $. Take $p\in (0, 1)$, then $I\in H^p$. Since a bounded subset of
$H^p$ is a locally bounded family \cite[p.\,\@36]{D}, it follows
that
\begin{equation}\label{supHp}\sup_{n\ge 1}\Vert f_n\Vert
_{H^p}\,=\,\infty .\end{equation} Now, (\ref{supHp}) implies that
$\Vert f_n\Vert _{H^p}> \Vert I\Vert _{H^p}$ for some $n$. Using
Proposition\,\@\ref{Pr-2}, we see that this implies that
\begin{equation*}\label{star-NO}\text{the inequality $\left (\log \vert f_n\vert \right
)^*\le \left (\log \vert I\vert \right )^*$ is not true for some
$n$.}\end{equation*} Let $N$ be the smallest of all such $n$. Using
(\ref{fI}) and the fact that $f_1=h_1$, it follows that that $N>1$.
\par Then it is clear that (\ref{no-star}) holds with $F_1=f_1, F_2=f_{N-1}$.
\end{proof}\par\medskip
We have the following result regarding Question\,\@\ref{q2}.
\begin{theorem}\label{q2-neg} There exist $f, F$ analytic and univalent
in $\mathbb D$ such that $F$ is convexity preserving and with the
property that the inequality $\left (\log \vert f\star F\vert \right
)^*\le \left (\log \vert f\vert \right )^*$ does not hold.
\end{theorem}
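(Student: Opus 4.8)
The plan is to exploit the same mechanism that worked for Theorem~\ref{q1-neg}: find a bound-preserving $F$ so that convolution by $F$ can \emph{increase} an integral mean $M_p(r,\cdot)$ for some $p<1$, and then invoke Proposition~\ref{Pr-2} to conclude that the star-inequality must fail. The key point is that Theorem~\ref{ThA} guarantees $M_p(r,f\star F)\le M_p(r,f)$ only for $p\ge 1$; for $0<p<1$ there is no such protection, and this is exactly the gap we want to use. So first I would fix a value $p\in(0,1)$, and look for a univalent $f$ and a convexity-preserving (hence bound-preserving) univalent $F$ with $M_p(r_0,f\star F)>M_p(r_0,f)$ for some $r_0\in(0,1)$; by the discussion following Proposition~\ref{Pr-2} (applied with $\Phi(x)=e^{px}$), this strict inequality of $p$-means forces $\bigl(\log|f\star F|\bigr)^*\le\bigl(\log|f|\bigr)^*$ to fail in $\mathbb D^+$.

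For the concrete construction I would try to take $F$ convexity preserving and as simple as possible — e.g.\ $F(z)=\dfrac{1}{1-\xi z}$ for a suitable fixed $\xi\in\mathbb T$, or a convex combination of two such kernels, which is convexity preserving since it arises from a probability measure on $\mathbb T$; such an $F$ is also univalent. Convolving with $\frac{1}{1-\xi z}$ is just the rotation $f\mapsto f(\xi\,\cdot)$, which does not change $M_p(r,f)$; so to get a genuine increase one needs $F$ to be a nontrivial convex combination, say $F(z)=\frac{1}{2}\bigl(\frac{1}{1-z}+\frac{1}{1+z}\bigr)=\frac{1}{1-z^2}$, which corresponds to the even part of $f$, i.e. $(f\star F)(z)=\tfrac12\bigl(f(z)+f(-z)\bigr)$. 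For $f$ I would pick a univalent function that is \lq\lq lopsided'' — large on one side of the disc and small on the opposite side, such as a suitably rotated Koebe-type map or a half-plane map $f(z)=\frac{z}{1-z}$ — so that the symmetrization $\tfrac12(f(z)+f(-z))$ redistributes mass in a way that raises a low-order mean. One then computes, or estimates, $M_p(r_0,\tfrac12(f+f(-\cdot)))$ versus $M_p(r_0,f)$ for a well-chosen small $p$ and $r_0$ close to $1$, and checks the inequality goes the wrong way.

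The main obstacle will be verifying the strict inequality of $p$-means in a clean, checkable way: symmetrization operators like $f\mapsto\tfrac12(f(z)+f(-z))$ tend to \emph{decrease} reasonable means, so a naive choice will not work, and one must be careful that the candidate $F$ really is convexity preserving and that $f$ is genuinely univalent. A robust alternative, which I would fall back on, mirrors the iteration trick of Theorem~\ref{q1-neg}: choose $F$ bound preserving and univalent (a finite convex combination of point-mass kernels), consider the iterates $f\mapsto f\star F\star\dots\star F$, and argue that if the star-inequality $\bigl(\log|f\star F^{\star k}|\bigr)^*\le\bigl(\log|f|\bigr)^*$ held for every $k$, then by Proposition~\ref{Pr-2} the $H^p$-norms ($p<1$) would stay bounded, contradicting an unboundedness statement obtained from the Taylor coefficients of $f\star F^{\star k}$ (engineered, as before, so that some coefficient grows); then the star-inequality fails at the first $k$ where it breaks, and absorbing $F^{\star(k-1)}$ into the \lq\lq $f$'' slot (note $F^{\star(k-1)}$ is still univalent if $F$ is chosen appropriately, e.g. $F(z)=\frac{1}{1-z^2}$ giving $F^{\star k}(z)=\frac{1}{1-z^2}$, so one should instead take $F$ whose iterates remain univalent but amplify a coefficient) produces the desired pair $(f\star F^{\star(k-1)},F)$ with both factors univalent and $F$ convexity preserving. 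I expect the delicate bookkeeping to be: (a) ensuring the relevant iterate is univalent, and (b) pinning down that the failure of the $p$-mean inequality at level $k$ can be rewritten, via Proposition~\ref{Pr-2} again, as the failure of the star-inequality for the shifted pair.
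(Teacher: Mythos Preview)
Your proposal has a fundamental obstruction that makes the main fallback strategy unworkable. If $F$ is bound preserving then $F(z)=\sum_{n\ge 0}c_n z^n$ with $c_n=\int_{\mathbb T}\xi^n\,d\mu(\xi)$ for a measure of total variation at most $1$, so $|c_n|\le 1$ for every $n$. Consequently the $n$-th Taylor coefficient of $f\star F^{\star k}$ is $a_n c_n^{\,k}$, which satisfies $|a_n c_n^{\,k}|\le |a_n|$ for all $k$. Thus the family $\{f\star F^{\star k}:k\ge 1\}$ has uniformly bounded Taylor coefficients (by those of $f$), hence is locally bounded; no coefficient can blow up, and the iteration argument of Theorem~\ref{q1-neg} simply cannot produce unboundedness here. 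In short, the very feature you want to exploit --- coefficient growth under iteration --- is \emph{ruled out} by bound preservation. Your first concrete candidate is also problematic: $F(z)=\tfrac12\bigl(\frac{1}{1-z}+\frac{1}{1+z}\bigr)=\frac{1}{1-z^2}$ has $F'(0)=0$, so it is not univalent, and the theorem demands univalence of $F$. The remaining suggestion (show directly that some $M_p$ with $p<1$ increases under an averaging operator) is left as a hope, and, as you yourself note, such averaging operators tend to \emph{decrease} means; you give no mechanism by which a sub-$1$ mean should go up.

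The paper's argument is completely different and avoids all of this. It takes the explicit pair
\[
f(z)=\frac{1}{(1-z)^2},\qquad F(z)=1-\tfrac12 z,
\]
both univalent and zero-free, with $f\star F=1-z$. One checks that $F$ is convexity preserving via $F(z)=\frac{1}{2\pi}\int_{-\pi}^\pi\frac{1-\cos\theta}{1-e^{i\theta}z}\,d\theta$. The key observation is a duality lemma (Lemma~\ref{lem-no}): if $f$ and $f\star F$ are zero-free, $F(0)=1$, and $F$ is convexity preserving, then $\bigl(\log|f\star F|\bigr)^*\le\bigl(\log|f|\bigr)^*$ forces the same inequality for the reciprocals, $\bigl(\log|1/(f\star F)|\bigr)^*\le\bigl(\log|1/f|\bigr)^*$. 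But here $1/(f\star F)=\frac{1}{1-z}\notin H^\infty$ while $1/f=(1-z)^2\in H^\infty$, so the reciprocal star-inequality visibly fails, and by contraposition the original star-inequality fails too. The whole point is to pass to $-\log|f|$ rather than to fight for a $p<1$ mean increase; this sidesteps the obstruction above entirely.
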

\par The following lemma will be used in the proof of
Theorem\,\@\ref{q2-neg}.
\begin{lemma}\label{lem-no}
Let $f, F\in \hol (\mathbb (D)$ and suppose that $F(0)=1$, $F$ is
convexity preserving, and that $f$ and $f\star F$ are zero-free in
$\mathbb D$ and satisfy the inequality $\left (\log \vert f\star
F\vert \right )^*\le \left (\log \vert f\vert \right )^*.$ Then we
also have that
\begin{equation}\label{ine-inv}\left (\log \left \vert \frac{1}{f\star F}\right \vert \right )^*\le \left (\log \left \vert \frac{1}{f}\right \vert \right
)^*.\end{equation} \end{lemma}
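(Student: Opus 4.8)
The plan is to reduce everything to the elementary ``complementary'' identity for the star-function, combined with the observation that, under the stated hypotheses, $\log\vert f\vert$ and $\log\vert f\star F\vert$ are harmonic in $\mathbb D$ and have the \emph{same} integral over each circle $\{\vert z\vert=r\}$.

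First I would record the complementary identity. Fix $r\in(0,1)$ and let $u$ be integrable on $\{\vert z\vert=r\}$. If $\vert E\vert=2\theta$, then the complement $E^{c}$ (inside one period) has measure $2(\pi-\theta)$ and $\int_{E}u=\int_{-\pi}^{\pi}u-\int_{E^{c}}u$; taking the supremum over such $E$, and using that $E\mapsto E^{c}$ is a bijection onto the measurable sets of measure $2(\pi-\theta)$, one gets, for $0\le\theta\le\pi$,
\[
(-u)^{*}(re^{i\theta})=-\inf_{\vert E\vert=2\theta}\int_{E}u(re^{it})\,dt=u^{*}(re^{i(\pi-\theta)})-\int_{-\pi}^{\pi}u(re^{it})\,dt .
\]
Since $\log\vert 1/g\vert=-\log\vert g\vert$, applying this with $u=\log\vert f\star F\vert$ and with $u=\log\vert f\vert$ turns the desired inequality (\ref{ine-inv}) into
\[
(\log\vert f\star F\vert)^{*}(re^{i(\pi-\theta)})-\int_{-\pi}^{\pi}\log\vert (f\star F)(re^{it})\vert\,dt\ \le\ (\log\vert f\vert)^{*}(re^{i(\pi-\theta)})-\int_{-\pi}^{\pi}\log\vert f(re^{it})\vert\,dt .
\]

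Next I would dispose of the two integral terms. Since $f$ and $f\star F$ are zero-free in $\mathbb D$, the functions $\log\vert f\vert$ and $\log\vert f\star F\vert$ are harmonic in $\mathbb D$, so by the mean value property $\int_{-\pi}^{\pi}\log\vert f(re^{it})\vert\,dt=2\pi\log\vert f(0)\vert$ and $\int_{-\pi}^{\pi}\log\vert (f\star F)(re^{it})\vert\,dt=2\pi\log\vert (f\star F)(0)\vert$ for every $r\in(0,1)$. Writing $f(z)=\sum a_{n}z^{n}$ and $F(z)=\sum b_{n}z^{n}$ we have $(f\star F)(0)=a_{0}b_{0}=f(0)F(0)=f(0)$ because $F(0)=1$; hence the two integrals coincide. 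Cancelling them, the displayed inequality reduces to $(\log\vert f\star F\vert)^{*}(re^{i(\pi-\theta)})\le(\log\vert f\vert)^{*}(re^{i(\pi-\theta)})$, which holds because $re^{i(\pi-\theta)}\in\mathbb D^{+}$ whenever $re^{i\theta}\in\mathbb D^{+}$ and the hypothesis gives $(\log\vert f\star F\vert)^{*}\le(\log\vert f\vert)^{*}$ throughout $\mathbb D^{+}$. This establishes (\ref{ine-inv}).

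There is no serious obstacle here; the argument is essentially bookkeeping. The one genuinely substantive point is the cancellation of the full-circle integrals of $\log\vert f\vert$ and $\log\vert f\star F\vert$ --- precisely the feature that makes the inequality survive the order-reversing map $g\mapsto 1/g$ --- and it is exactly there that the zero-freeness of $f$ and $f\star F$ together with the normalization $F(0)=1$ are indispensable (the convexity-preserving hypothesis on $F$ plays no role in this lemma and is imposed only because of the setting in which the lemma will be applied). One should also note the routine point that the complementary identity, and the equality $(\log\vert h\vert)^{*}(re^{i\pi})=\int_{-\pi}^{\pi}\log\vert h(re^{it})\vert\,dt$ underlying it, make sense because for a zero-free $h$ the function $\log\vert h\vert$ is continuous, hence integrable, on each circle $\{\vert z\vert=r\}$ with $r<1$.
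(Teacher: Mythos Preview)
Your argument is correct and is essentially identical to the paper's: both derive the complementary identity $(-u)^*(re^{i\theta})=u^*(re^{i(\pi-\theta)})-\int_{-\pi}^{\pi}u(re^{it})\,dt$, use zero-freeness and the mean value property together with $F(0)=1$ to match the full-circle integrals, and then invoke the hypothesis at the reflected angle. Your remark that the convexity-preserving assumption is not actually used in the lemma is also accurate.
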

\begin{proof}
Set $u=\log \vert f\star F\vert $, $v=\log \vert f\vert $. Then $u$
and $v$ are harmonic in $\mathbb D$, $u(0)=v(0)$, and $u^*\le v^*$.
Then it follows that, for $0<r<1$ and $0\le \theta \le \pi $,
\begin{align*}(-u)^*(re^{i\theta })\,=\,&\sup_{\vert E\vert =2\theta
}\int _{E}-u(re^{it})dt\\=\,&\sup_{\vert E\vert =2\theta }\left
(-\int _{-\pi }^\pi u(re^{it})dt\,+\,\int_{[-\pi ,\pi ]\setminus
E}u(re^{it})dt\right )\\ =&-2\pi u(0)\,+\,u^*(re^{i(\pi -\theta
)})\,=\,-2\pi v(0)\,+\,u^*(re^{i(\pi -\theta )})\\ \le & -2\pi
v(0)\,+\,v^*(re^{i(\pi -\theta )})\,=\,(-v)^*(re^{i\theta
}).\end{align*} Hence, we have proved that $(-u)^*\le (-v)^*$ which
is equivalent to (\ref{ine-inv}).
\end{proof}

\begin{Pf}{\,\em{Theorem\,\@\ref{q2-neg}.}} Set
$$f(z)=\frac{1}{(1-z)^2 }=\sum_{n=0}^\infty (n+1)z^n,\quad F(z)\,=\,1\,-\,\frac{1}{2}z, \quad z\in \mathbb D.$$
Clearly, $f$ and $F$ are analytic, univalent, and zero-free in
$\mathbb D$. Also
\begin{equation*}\label{fff}(f\star F)(z)=1-z,\quad z\in \mathbb
D.\end{equation*} Hence $f\star F$ is also zero-free in $\mathbb D$.
Notice that $\frac{1}{f\star F}\not\in H^\infty $ \, and
$\frac{1}{f}\in H^\infty $. Then it follows that
\begin{equation}\label{infff}\text{the inequality $\left (\log \left\vert \frac{1}{f\star F}\right \vert \right )^*\le \left (\log
\left \vert\frac{1}{f}\right \vert \right )^*$ does not
hold.}\end{equation} Now, it is a simple exercise to check that
$$F(z)=\frac{1}{2\pi }\int _{-\pi }^\pi \frac{1-\cos \theta
}{1-e^{i\theta }z}\,d\theta $$and then it follows that $F$ is
convexity preserving. Then,  using (\ref{infff}) and
Lemma\,\@\ref{lem-no}, it follows that the inequality $\left (\log
\vert f\star F\vert \right )^*\le \left (\log \vert f\vert \right
)^*$ does not hold, as desired. \end{Pf}
\par\medskip
We close the paper with a positive result, determining a class of
univalent functions $\mathcal Z$ such that (\ref{ineq-Mp}) is true
for all $p>0$, whenever $f\in \mathcal Z$ and $F$ is convexity
preserving.
\par A domain $D$ in $\mathbb C$ is said to be Steiner symmetric if
its intersection with each vertical line is either empty, or is the
whole line, or is a segment placed symmetrically with respect to the
real axis. We let $\mathcal Z$ be the class of all functions $f$
which are analytic and univalent in $\mathbb D$ with $f(0)=0$,
$f^\prime (0)>0$, and whose image is a Steiner symmetric domain. The
elements of $\mathcal Z$ will be called Steiner symmetric functions.
Using arguments similar to those used by Jenkins \cite{Jen} for
circularly symmetric functions, we see that a univalent function $f$
with $f(0)=0$ and $f^\prime (0)>0$ is Steiner symmetric if and only
if it satisfies the following two conditions: (i) $f$ is typically
real and (ii) $\re f$ is a symmetric decreasing function on each of
the circles $\{ \vert z\vert =r\}$ ($0<r<1$). Then it follows that
if $f\in \mathcal Z$ then for every $r\in (0, 1)$, the domain
$f\left (\{ \vert z\vert <r\} \right )$ is a Steiner symmetric
domain and, hence, the function $f_r$ defined by $f_r(z)=f(rz)$
\,($z\in \mathbb D$) belongs to $\mathcal Z$ and it extends to an
analytic function in the closed unit disc $\overline {\mathbb D}$.
Now we can state our last result.
\begin{theorem} Suppose that $f\in\mathcal Z$ and let $F$ be an analytic
function in $\mathbb D$ which is convexity preserving. We have, for
every $p>0$,
\begin{equation}\label{ineq-Mp-V}M_p(r, f\star
F)\,\le M_p(r, f),\quad 0<r<1.\end{equation}
\end{theorem}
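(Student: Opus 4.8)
The plan is to reduce everything to the star-function inequality for Steiner symmetric functions and then invoke the convexity-preserving hypothesis on $F$ together with Proposition~\ref{Pr-1}. First I would observe that, since $F$ is convexity preserving, there is a probability measure $\mu$ on $\mathbb T$ with
\[
F(z)=\int_{\mathbb T}\frac{d\mu(\xi)}{1-z\xi},\qquad z\in\mathbb D,
\]
and, exactly as in the proof of Theorem~\ref{ThA}, $(f\star F)(z)=\int_{\mathbb T}f(\xi z)\,d\mu(\xi)$. Because we only need (\ref{ineq-Mp-V}) for a fixed $r$, and since $f_r\in\mathcal Z$ extends analytically to $\overline{\mathbb D}$, I would work with $f_r$ and aim to prove $(\log|f_r\star F|)^*\le(\log|f_r|)^*$ in $\mathbb D^+$; by the remark following Proposition~\ref{Pr-2} this gives $M_p(\rho,f_r\star F)\le M_p(\rho,f_r)$ for all $p>0$ and all $\rho\in(0,1)$, and letting $\rho\to 1^-$ yields (\ref{ineq-Mp-V}).

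The key geometric input is that if $f\in\mathcal Z$ then $\re f$ is harmonic and symmetric decreasing on each circle $\{|z|=r\}$, so by the second bullet property of the star-function, $(\re f)^*(re^{i\theta})=\int_{-\theta}^{\theta}\re f(re^{it})\,dt$ is harmonic in $\mathbb D^+$. The decisive step is then to show that for each fixed $\xi\in\mathbb T$ the rotated function $z\mapsto f_r(\xi z)$ satisfies a pointwise relation with $f_r$ after passing through the $\re(\cdot)$ functional; concretely, I would compare $\re\big((f_r\star F)(z)\big)=\int_{\mathbb T}\re f_r(\xi z)\,d\mu(\xi)$ with $\re f_r(z)$. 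The symmetric-decreasing structure means that for the subharmonic function $u=\log|f_r\star F|$ one should prove $u^*\le v^*$ with $v=\log|f_r|$ by first establishing it at the level of $\re$: since $f_r$ is typically real, $\log|f_r|$ and $\re f_r$ are tightly linked on $\mathbb D^+$, and Baernstein-type arguments (as in the circularly symmetric case treated by Jenkins and used in \cite{Le79}) let one transfer a star inequality for $\re$ to one for $\log|\cdot|$. The relevant inequality for $\re$ follows because $\re(f_r\star F)$ is an average over rotations of $\re f_r$, each rotation of a symmetric-decreasing function on circles has a star-function bounded by that of the original (a rotation does not increase the supremum defining $u^*$, and averaging preserves the bound by Minkowski/Jensen), so $(\re(f_r\star F))^*\le(\re f_r)^*$.

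Carrying this out cleanly, the main obstacle is the passage from the star inequality for real parts to the star inequality for $\log|\cdot|$: one must use that elements of $\mathcal Z$ are typically real and Steiner symmetric to control $\arg f_r$ and hence $|f_r|$ from $\re f_r$ on each circle, mirroring the argument of Jenkins \cite{Jen} for circularly symmetric functions and its use in \cite{Le79,Gi86}. I expect this is where the technical weight lies: one needs that the level sets of $|f_r|$ on $\{|z|=r\}$ are again symmetric intervals, so that $(\log|f_r\star F|)^*$ can be computed as an integral over a symmetric arc and compared term by term, after which the averaging argument above closes the proof. Once $(\log|f_r\star F|)^*\le(\log|f_r|)^*$ is in hand, Proposition~\ref{Pr-1} (with $\Phi(x)=e^{px}$) and the limit $\rho\to 1^-$ finish the theorem.
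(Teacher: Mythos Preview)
Your averaging computation for the real parts is exactly right and matches the paper: from $(f\star F)(z)=\int_{\mathbb T}f(\xi z)\,d\mu(\xi)$ one gets
\[
\bigl(\re(f_r\star F)\bigr)^*(Re^{i\theta})
=\sup_{|E|=2\theta}\int_{\mathbb T}\int_E \re f_r(Re^{it}\xi)\,dt\,d\mu(\xi)
\le \int_{\mathbb T}(\re f_r)^*(Re^{i\theta})\,d\mu(\xi)
=(\re f_r)^*(Re^{i\theta}).
\]
The gap is in the next step. You propose to lift this to $(\log|f_r\star F|)^*\le(\log|f_r|)^*$ and only then apply Proposition~\ref{Pr-2}. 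But the mechanism you sketch relies on $|f_r|$ being symmetric decreasing on circles (``level sets of $|f_r|$ on $\{|z|=r\}$ are symmetric intervals''), and that is a \emph{circularly} symmetric property, not a Steiner symmetric one. For $f\in\mathcal Z$ it is $\re f$, not $\log|f|$, that is symmetric decreasing on each circle; the Jenkins/Leung arguments you invoke work directly with $\log|f|$ in the circularly symmetric setting and do not provide a route from a $\re$-star inequality to a $\log|\cdot|$-star inequality here. The statement that ``$\log|f_r|$ and $\re f_r$ are tightly linked on $\mathbb D^+$'' because $f_r$ is typically real is too vague to carry any weight: typical reality only gives $\im f_r>0$ on $\mathbb D^+$, which does not control $|f_r|$ in terms of $\re f_r$ in the way you need.

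The paper sidesteps this entirely. It does \emph{not} attempt a $\log|\cdot|$-star inequality. Instead, having the real-part star inequality together with the convex-hull containment (which yields $\min_{\mathbb D}\re f_r\le\min_{\mathbb D}\re(f_r\star F)$ and $\max_{\mathbb D}\re(f_r\star F)\le\max_{\mathbb D}\re f_r$), it quotes Proposition~6 of \cite{Gi86}, which converts exactly this package of information directly into $\|f_r\star F\|_{H^p}\le\|f_r\|_{H^p}$ for $0<p\le 2$. Since Theorem~\ref{ThA} already handles $p\ge 1$, this finishes the proof. So the missing ingredient in your plan is precisely that cited proposition (or an equivalent device taking a $\re$-star inequality to $M_p$-bounds), not a passage to $(\log|\cdot|)^*$.
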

\begin{proof} In view of Theorem\,\@\ref{ThA} we only need to prove
(\ref{ineq-Mp-V}) for $0<p<1$. Let $\mu $ be the probability measure
on $\mathbb T$ such that $F(z)=\int _{\mathbb T}\frac{d\mu (\xi
)}{1-z\xi }$\, ($z\in \mathbb  D$). Then we have
\begin{equation}\label{rep}\left (f\star F\right )(z)=\int_{\mathbb T}f(\xi
z)d\mu (\xi ).\end{equation} Since $F$ is convexity preserving, for
$0<r<1$, we have  that $\left (f_r\star F\right )(\overline {\mathbb
D})$ is contained in the closed convex hull of $f_r(\ol\D)$. This
easily yields
\begin{equation*}
    \min_{z\in\D}\re f_r(z)
    \leq \min_{z\in\D}\re(f_r\star F)(z),\quad
    \max_{z\in\D}\re(f_r\star F)(z)
    \leq \max_{z\in\D}\re f_r(z).
    \end{equation*}
    By the remarks in the previous paragraph, we find that, for all $r\in (0, 1)$, $f_r$ belongs to $\mathcal Z$ and
    extends to an analytic function in the closed unit disc
$\overline {\mathbb D}$.  Finally, we claim that
\begin{equation}\label{last}\bigl(\re(f_r\star F)\bigr)^*\leq
\bigl(\re
    f_r\bigr)^*,\quad 0<r<1.\end{equation} Once this is proved, using Proposition\,\@6 of
    \cite{Gi86}, we deduce that
    $$M_p(r, f\star F)=\Vert f_r\star F\Vert _{H^p}\le\Vert f_r\Vert
    _{H^p}=M_p(r, f),\quad 0<p\le 2,$$ finishing our proof.
\par So we proceed to prove (\ref{last}). Fix $r\in (0, 1)$ and set
$u=\re (f_r\star F)$, $v=\re f_r$. Using (\ref{rep}), we have, for
$0<R<1$ and $0<\theta <\pi $,
\begin{align*}
    u^*(Re^{i\theta})
    &=\sup_{|E|=2\theta}\int_E u(Re^{it})dt
    =\sup_{|E|=2\theta}\int_E
        \int_{\T} v(Re^{it}\xi)d\mu(\xi) dt\\
    &=\sup_{|E|=2\theta }\int_{\T}\int_E
        v(Re^{it}\xi)dt d\mu(\xi)
    \leq\int_{\T}v^*(Re^{i\theta })d\mu(\xi)
    =v^*(Re^{i\theta }).
    \end{align*}
    \end{proof}
    \par\medskip
    {\bf Acknowledgements.} We wish to express our gratitude to the
    referee who made valuable suggestions for improvement.
    \medskip


\end{document}